\theoremstyle{plain}
\newtheorem{example}{Example}
\newtheorem{definition}{Definition}
\newtheorem{problem}{Problem}
\newtheorem{theorem}{Theorem}
\newtheorem{lemma}[theorem]{Lemma}
\def\endproof{\hfill{\vrule height 4pt width 4pt depth 0pt}\par\smallbreak}
\def\d{{\rm dom}\hphantom{.}}
\def\2{{\bf 2}}
\def\P2{{\rm Par}(\2)}
\def\PmA{{\rm Par}^{(m)}(A)}
\def\PnA{{\rm Par}^{(n)}(A)}
\def\Pn2{{\rm Par}^{(n)}(2)}
\def\PA{{\rm Par}(A)}
\def\OA{{\rm O}(A)}
\def\O2{{\rm O}(\2)}
\def\OpnA{{\rm O}^{(n)}_{A}}
\def\pp{{\rm pPol}\,}
\def\PT{{\rm Pol}\,}
\def\st{~|~}
\def \imp{\Longrightarrow}
\def\va{{\vec a}}
\def\vb{{\vec b}}
\def\vv{{\vec v}}
\def\ve{{\vec e}}
\def\r{\rho}
\def\0{\vec{0}}
\def\1{\vec{1}}
\def\k{{\bf k}}
\def\JA{{\rm J}(A)}
\def\J2{{\rm J}(\2)}
\def\pne{{\pp(\ne)}}
\def\ple{{\pp(\le)}}
\def\maj{{\rm maj}}
\def\cF{{\rm F}}
\begin{document}
\title{Finitely generated maximal partial clones and their intersections}
\author{Miguel Couceiro}
\address[M. Couceiro]{Mathematics Research Unit \\
University of Luxembourg \\
6, rue Richard Coudenhove-Kalergi \\
L-1359 Luxembourg \\
Luxembourg}
\email{miguel.couceiro@uni.lu}
\author{Lucien Haddad}
\address[L. Haddad]{Math \& Info\\ Coll\`ege
militaire royal du Canada  \\ B.P 17000, STN Forces \\
Kingston ON, K7K 7B4 Canada.}
\email{haddad-l@rmc.ca}
\date{\today}
\begin{abstract}
Let $A$ be a finite non-singleton set. For $|A|=2$ we show that the partial clone consisting of all selfdual monotone partial functions  on $A$ is not finitely generated, while it is the intersection of two finitely generated maximal partial clones on $A$. Moreover for $|A| \ge 3$ we show that there are pairs of finitely generated maximal partial clones whose intersection is a non-finitely generated partial clone on $A$.

\end{abstract}
\maketitle

\section{Preliminaries}

Let $A$ be a finite set with cardinality $|A|:=k \ge 2$.
For a positive integer
$n$, an {\ $n$-ary partial function\/} on $A$ is a map $f: \d(f) \to
A$ where $\d(f)$ is subset of $A^n$ called the {\it domain} of $f$.
Let ${\rm Par}^{(n)}(A)$ denote the set of all $n$-ary
partial functions on $A$ and let 
$$\PA := \bigcup\limits_{n\ge 1} {\rm Par}^{(n)}(A).$$  
Moreover, denote by $ \OA$  the set of all {\it total} or everywhere defined functions, i.e., 
$$\OA := \bigcup\limits_{n\ge 1}\{f \in \PnA \st \d(f) = A^n\}.$$

 For $n, m \ge 1$, $f \in \PnA$ and $g_1, \dots, g_n
\in \PmA$,  the {\it composition} of $f$ and $g_1, \dots, g_n$,
denoted by $h:=f[g_1, \dots, g_n] \in \PmA$, is the partial function whose domain is 
 $$  \d(h):= \{\va \in A^m \st
 \va \in \bigcap\limits^n_{i=1} \d(g_i)\quad \mbox{and} \quad (g_1(\va), \dots, g_n(\va))\in \d(f)\}$$
and defined by
$$
h(\va):=f(g_1(\va),\dots, g_n(\va)),\quad \mbox{for all $\va \in \d(h)$.}
$$

 For every positive integer $n$  and each
$1 \le i \le n$, let
$e_{i}^n$ denote the {\it $n$-ary i-th projection} defined by
$\d(e^n_{i})=A^n$ and  $e^n_{i} (a_1, \dots, a_n) = a_i$ for all $(a_1, \dots, a_n) \in A^n$. Denote by $\JA$ the set of all projections on $A$.

 A {\it partial clone} on $A$ is a subset of $\PA$ closed under
composition and containing the set $\JA$ of all projections.
A  partial clone $C$ is said to be {\it strong} if it contains all
subfunctions of its functions, i.e., if for every $g \in \PA$,  we have $g \in C$ whenever $g=f|_{\d (g)}$, for some $f \in C$.

\begin{example}  Let $a \in A$ and consider the
set
$$ \mathcal{F}_a = \bigcup\limits_{n \ge 1} \{f \in \PnA \st 
\quad \mbox{if}\quad (a, \ldots, a) \in \d (f), \quad \mbox{then}\quad  f(a,\ldots, a) = a\}.
$$
Then $ \mathcal{F}_a$ is a strong partial clone on $A$.
\end{example}

The idea behind this example is formalized as follows.
For $h \ge 1$, let $\r$ be an $h$-ary relation on $A$ and $f$ be
an $n$-ary partial function on $A$. We say that $f~preserves~
\r$ if for every $h \times n$ matrix $M=[M_{ij}]$, whose columns
$M_{*j} \in \r,~(j=1,\ldots n)$ and whose rows $M_{i*} \in \d
(f)$ $(i=1,\ldots,h)$, we have
$(f(M_{1*}),\ldots,f(M_{h*})) \in \r$.  Define
$$\pp\r:=\{f
\in \PA \st f~{\rm preserves}~\r\}.$$

\begin{example} Denote by $\le$ and $\ne$ the two binary relations $\{(0,0),(0,1),(1,1)\}$ and  
$\{(0,1), (1,0)\}$, respectively, on $\2:=\{0,1\}$. Then
\begin{multline*}
\ple :=\{f \in \P2 \st (a_1,\dots,a_n)\in \d(f),~(b_1,\dots,b_n) \in \d(f) \quad \mbox{and}  \\
a_1 \le b_1, \dots, a_n \le b_n \imp f(a_1,\dots,a_n)\le f(b_1,\dots,b_n)\}
\end{multline*}
 and
\begin{multline*}
\pne:=\{f \in \P2 \st (a_1,\dots,a_n)\in \d(f) \quad \mbox{and}\\
(1+a_1,\dots,1+a_n) \in \d(f)\, \imp \, f(1+a_1,\dots,1+a_n)=1+f(a_1,\dots,a_n)\},
\end{multline*}
where $+$ denotes the sum modulo 2.
\end{example}

It is well known (see, e.g., \cite{bornerhaddad1,halau1} and \cite{Lau2006}, chapter 20) that $\pp
\r$ is a strong partial clone called the {\it partial clone determined by the relation} $\r$.  
Let $\PT (\r):= \pp(\r) \cap\OA$ be the partial clone consisting of all total functions preserving the relation $\r$.
The set of all partial clones on $A$, ordered by inclusion, forms an algebraic
lattice $\mathcal{L}_A$ where the meet coincides with the intersection.

We say that a partial clone $C_0$ on $A$ is
{\it covered}  by a partial clone
$C_1$ on $A$ if there is no partial clone $C$ such that $C_0 \subset C \subset C_1$.
A {\it maximal partial clone} is a partial clone covered by $\PA$.

 For $\cF \subseteq \PA$, let $\langle  \cF \rangle$ denote the {\em
partial clone generated by} $\cF$, i.e., the smallest partial clone containing $\cF$ or, equivalently, the intersection of
all partial clones on $A$ containing $\cF$. If $ C=\langle \cF \rangle$, then we say that $\cF$ is a {\it generating
set} for $ C$. A partial clone $C$ is said to be {\em finitely generated}
if it admits a finite generating set, i.e., if $ C=\langle \cF \rangle$ for
some finite set $\cF \subseteq  C$.

Generating sets for clones and partial clones have been extensively studied in the literature (see, e.g., surveys in \cite{ivosurvey} for the total case and \cite{bornerhaddad2} for the partial case). For instance,  Freivald \cite{freivald} showed that there are eight maximal partial clones on $\2:=\{0,1\}$,  and Lau \cite{lau2} showed that exactly two of  them are not finitely generated, namely, the two strong maximal partial clones of Slupecki type (see \cite{halau1} and \cite{Lau2006}, section 20, for details).

In this paper we are particularly interested in the two maximal partial clones $\pp(\le)$ and $\pp(\ne)$. As shown in \cite{haddad}, the interval of partial clones $[\ple \cap \pne, \P2]$ is infinite. However, it is still unknown if this interval is countably infinite or of continuum cardinality, and this problem seems to be difficult to decide. 

This lead the authors to study the partial clone  $\ple \cap \pne$. 
Here, we show that the partial clone $\ple \cap \pne$ is not finitely generated. 
We also generalize this result by showing that, on every non-singleton finite set $A$, there are pairs of finitely generated maximal partial clones whose intersection is a not finitely generated.

We shall make use the concept of ``separating clone" as introduced in \cite{bornerhaddad1} (see also \cite{bornerhaddad2,halau1,halau2}). 

\begin{definition} 
A clone $C$ on $A$ is {\it separating} if there exists $m \ge 1$ such that for all $n > m$ and all $\vb \in A^n$, there are $m$ functions $f_1,\dots,f_m \in C \cap  \OpnA$ such that for every $ \va, \vb\in A^n$, 
$$\mbox{if }\quad (f_1(\va),\dots,f_{m}(\va))=
(f_1(\vb),\dots,f_{m}(\vb)), \quad \mbox{then}  \quad \va=\vb.$$
\end{definition}

For example, $\OA$ is a separating clone on $A$ and it is shown in \cite{bornerhaddad1} that $\PT \varrho$ is a separating clone for every
$$\varrho \in \{\{0\},\{1\},\{(0,1)\},\{(0,1),(1,0)\},
\{(0,0),(0,1),(1,1)\} \}.$$
Note that  if $ C \subseteq  D$ are two clones and if $ C$ is a separating
clone, then $ D$ is also a separating clone. 

The main results of this paper are based on the following criterion established in \cite{bornerhaddad1} (see also \cite{bornerhaddad2,halau1,halau2}).

\begin{theorem}\label{criterion}
Let  $A$ be a finite set with at least two elements, and let $ D$ be a strong partial clone
on  $A$. 
If  $ C:= D\cap\OA$ is not a separating clone, then $ D$ is not finitely generated.
\end{theorem}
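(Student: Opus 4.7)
The plan is to argue the contrapositive: assume $D$ is a strong partial clone that is finitely generated, say $D = \langle F \rangle$ for a finite set $F \subseteq \PA$ of maximum arity $m$, and show that $C := D \cap \OA$ is separating with $m$ as the witnessing parameter.

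The first step is a normal-form lemma for $\langle F \rangle$: every $h \in D$ of arity $n$ is either a projection or of the form $h = f_0[h_1, \dots, h_r]$ with $f_0 \in F$ of arity $r \le m$ and $h_1, \dots, h_r \in D$ of arity $n$. This is proved by induction on the construction of $\langle F \rangle$ from $F \cup \JA$ under partial composition: whenever the outermost symbol of a composition is a projection the composition collapses to one of its arguments, and otherwise the inductive hypothesis pulls an element of $F$ to the outside, which then absorbs the surrounding composition using associativity of partial composition. This is where the bound $r \le m$ enters.

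The second step fixes $n > m$ and $\vb \in A^n$ and exploits strongness. The restriction $\psi_{\vb} := e^{n}_{1}|_{\{\vb\}}$ of the total projection $e^{n}_{1}$ to the singleton $\{\vb\}$ lies in $D$ (subfunction of $e^{n}_{1} \in D$), has $\d(\psi_{\vb}) = \{\vb\}$, and, since $|A^n| \ge 2$, is not a projection. The normal form yields $\psi_{\vb} = f_0[h_1, \dots, h_r]$ with $f_0 \in F$ of arity $r \le m$ and $h_1, \dots, h_r \in D$ of arity $n$. Writing $c_i := h_i(\vb)$, the tuple $(c_1, \dots, c_r)$ lies in $\d(f_0)$; unpacking $\d(\psi_{\vb}) = \{\vb\}$ through the definition of partial composition shows that for every $\va \in A^n$ with $\va \ne \vb$ either some $h_i$ is undefined at $\va$ or $(h_1(\va), \dots, h_r(\va)) \notin \d(f_0)$, and in particular differs from $(c_1, \dots, c_r)$. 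Consequently, if the $h_i$ were total members of $\OpnA$ they would already lie in $C$ and jointly isolate $\vb$, giving separation with the $r \le m$ witnesses $h_1, \dots, h_r$.

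The main obstacle, and what I expect to be the technical heart of the argument, is to trade the \emph{partial} witnesses $h_1, \dots, h_r$ for \emph{total} members of $C$ that continue jointly to isolate $\vb$. My plan is to iterate the normal form inside each $h_i$ and exploit strongness to refine intermediate partial pieces to total subfunctions defined on all of $A^n$, terminating at the projections which occur at the leaves of the finite term witnessing $\psi_{\vb} \in \langle F \rangle$ and which already belong to $C$. The delicate bookkeeping is to organize the extraction so that the total replacements $g_1, \dots, g_r \in C$ still satisfy the implication $(g_1(\va), \dots, g_r(\va)) = (c_1, \dots, c_r) \imp \va = \vb$ without exceeding $m$ functions; I expect the strongness hypothesis to be used essentially at this point, to prune intermediate partial values that would otherwise allow some $\va \ne \vb$ to share the image of $\vb$ under total extensions.
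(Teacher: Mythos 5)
The paper does not prove Theorem~\ref{criterion} at all: it is quoted from B\"orner and Haddad \cite{bornerhaddad1}, so there is no in-paper argument to compare against. Judged on its own terms, your proposal has the right skeleton --- argue the contrapositive, use strongness to place $\psi_{\vb}=e^n_1|_{\{\vb\}}$ in $D$, bring a generator $f_0\in F$ of arity $r\le m$ to the head of a term for $\psi_{\vb}$, and read the separation property off the condition $(h_1(\va),\dots,h_r(\va))\in\d(f_0)$ that defines the domain of the composition. Up to some unchecked care in the head normal form (a composition whose outer symbol is a projection collapses to a \emph{restriction} of one of its arguments, not to the argument itself, so the induction must carry restrictions along; strongness is what keeps these restrictions inside $D$), this part is sound and is indeed the standard opening of the B\"orner--Haddad argument.

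The genuine gap is the step you yourself flag as the technical heart, and the route you sketch for it cannot work as described. You propose to "exploit strongness to refine intermediate partial pieces to total subfunctions defined on all of $A^n$". Strongness closes $D$ under taking \emph{restrictions}; it never produces a total function on $A^n$ from a properly partial one, because a subfunction of $h_i$ has domain contained in $\d(h_i)$. What you actually need is to replace the partial witnesses $h_1,\dots,h_r$ by total members of $C=D\cap\OA$ that still jointly isolate $\vb$, and no extension property is available in a strong partial clone. Moreover, the iteration you suggest --- re-expanding each partial $h_i$ by the normal form and collecting the total functions appearing deeper in the term --- multiplies the number of witnesses by up to $m$ at every level of the term tree, so a term of depth $d$ yields on the order of $m^d$ total functions; since the depth of the term representing $\psi_{\vb}$ is not bounded, this does not deliver the fixed bound $m$ (or any fixed bound) demanded by the definition of a separating clone. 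As it stands the argument proves only the conditional statement "if the inner functions happen to be total, then $C$ separates $\vb$", and the unconditional claim remains open; closing it requires a genuinely different idea (this is precisely the content of the lemmas in \cite{bornerhaddad1} preceding the criterion), not bookkeeping on top of the present plan.
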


\section{Intersection of finitely generated maximal partial clones}

We start with the two element set $\2:=\{0,1\}$. It is shown in \cite{freivald}  that there are eight maximal partial clones on $\2$  and it is shown in \cite{lau2} that exactly two of  them are non-finitely generated  (they are the two strong maximal partial clones of Slupecki type, see \cite{halau1} and \cite{Lau2006} section 20 for details). In particular, Lau~\cite{lau2} showed the following interesting result.

 \begin{lemma} {\rm (\cite{lau2})}
  Let $A=\2$. Then, 
  \begin{enumerate}[(i)]
\item $\pp (\ne)$ is generated by its ternary partial functions, and
\item $\pp(\le)$ is generated by its binary partial functions.
\end{enumerate}
 \end{lemma}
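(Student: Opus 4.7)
Both parts of the lemma follow the same high-level scheme: combine the classical fact that the total part $\PT(\r)$ admits a generating set of small-arity functions (ternary for $\r={\ne}$, binary for $\r={\le}$) with a domain-restriction argument that cuts a total generator down to the prescribed domain of an arbitrary partial function in $\pp(\r)$.

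For part (ii), I would begin from $\PT(\le)=\langle \wedge,\vee,0,1\rangle$, all binary (viewing constants as binary by composition with projections). Given $f\in\ple$ of arity $n$, I would first produce a monotone total extension $\tilde f\in \PT(\le)$ by setting $\tilde f(\va):=\bigvee\{f(\vb)\st \vb\le \va,\ \vb\in\d(f)\}$ with the empty join read as $0$; monotonicity of $f$ on $\d(f)$ guarantees that $\tilde f$ agrees with $f$ there. Expressing $\tilde f$ as a term in $\wedge$, $\vee$ and constants gives a total function coinciding with $f$ on $\d(f)$. It then remains to restrict the domain back to $\d(f)$: produce inner $n$-ary partial functions $g_1,\dots,g_n$, each in the clone generated by binary partial monotone functions, such that $g_i(\va)=a_i$ on $\d(f)$ and $\bigcap_i\d(g_i)=\d(f)$. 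I plan to build the $g_i$'s by induction on $|\2^n\setminus\d(f)|$, excising one forbidden tuple at a time via a binary partial monotone function whose domain is a proper subset of $\2^2$; every restriction of a binary monotone function is itself monotone, so all such building blocks lie in $\ple$.

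Part (i) is analogous but uses ternary partial functions. The clone $\PT(\ne)$ is generated by a single ternary self-dual operation (e.g.\ $\neg\maj$, from which one recovers both $\maj$ and negation by substitution). Any $f\in\pne$ admits a self-dual total extension $\tilde f$ obtained by assigning consistent complementary values on pairs disjoint from $\d(f)$, whence $\tilde f$ is a term in that ternary generator. The subsequent domain-restriction step is slightly more delicate because self-duality couples complementary tuples: admissible domains for self-dual partial functions must be compatible with the complementation action on $\2^n$. Ternary self-dual partial functions supply the needed flexibility, and the induction on the size of the forbidden set proceeds in the same spirit as in part (ii).

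The main obstacle I anticipate is the combinatorial bookkeeping in the domain-restriction step of each part: verifying that the ``forbid one tuple at a time'' procedure yields exactly the prescribed domain without unintended side-effects, while each intermediate building block respects monotonicity (resp.\ self-duality). Once this is in place, the lemma follows by composing $\tilde f$ with the $g_i$'s.
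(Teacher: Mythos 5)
This lemma is quoted from Lau \cite{lau2}; the paper itself offers no proof, and the alternative proof it points to (Proposition 3.10 of \cite{bornerhaddad1}) follows the same ``extend to a total function, then cut the domain back down'' skeleton that you propose. Your extension steps are correct: $\tilde f(\va)=\bigvee\{f(\vb)\st \vb\le\va,\ \vb\in\d(f)\}$ is a monotone total extension of any $f\in\ple$, and a self-dual total extension of $f\in\pne$ is obtained by completing complementary pairs. (One small inaccuracy: domains of members of $\pne$ need \emph{not} be compatible with complementation --- $\pne$ is a strong partial clone, so the restriction of a self-dual function to an arbitrary domain already lies in it; no ``admissibility'' condition on domains arises.)

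The gap is in the excision step, which is exactly where the content of the lemma sits. As stated --- ``excising one forbidden tuple at a time via a binary partial monotone function whose domain is a proper subset of $\2^2$'' --- it does not work: applying one such function to a coordinate and one auxiliary function can only delete tuples that are detected by a pair consisting of that coordinate and a single monotone bit, which fails for a general tuple $\vec c$. The repair for (ii): isolate $\vec c=(c_1,\dots,c_n)$ by the two total monotone functions $u=\bigwedge_{i:c_i=1}x_i$ and $v=\bigvee_{i:c_i=0}x_i$ (empty meet $=1$, empty join $=0$), for which $(u(\va),v(\va))=(1,0)$ iff $\va=\vec c$; compose the binary partial constant $0$ with domain $\2^2\setminus\{(1,0)\}$ (a binary member of $\ple$) with $u,v$ to obtain the $n$-ary constant $0$ with domain $\2^n\setminus\{\vec c\}$; then $x_1\vee(\cdot)$ restores the projection value, and chaining further $\vee$'s over all forbidden tuples yields $e^n_1$ restricted to any prescribed $D\subseteq\2^n$, after which $\tilde f[g_1,\dots,g_n]$ finishes the argument. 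For (i) the analogous isolation needs \emph{two} total self-dual functions $w_1,w_2$ with $w_1^{-1}(0)\cap w_2^{-1}(0)=\{\vec c\}$ (possible because the zero-set of a self-dual function is an arbitrary transversal of the complementary pairs), fed together with $e^n_1$ into the restriction of $e^3_1$ to $\{(x,y,z)\st (y,z)\ne(0,0)\}$; this is precisely why ternary, rather than binary, partial functions are needed there. With these repairs your plan closes, but as written the decisive step is asserted rather than proved.
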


Another proof of the fact that both  $\pp (\le)$ and $\pp (\ne)$ are finitely generated is given in Proposition 3.10 of \cite{bornerhaddad1}. The proof is based on the concept of separating clones and on the fact that every partial function in $\pp(\le)$ (in $\pp(\ne)$) can be extended to a total function in $\PT(\le)$ (in $\PT(\ne)$ respectively).  It is noteworthy that the clone  $\PT(\le) \cap \PT(\ne) $ on $\2$ is generated by the (total) ternary majority function
\[
\maj(x_1,x_2,x_3)= (x_1\wedge x_2) \vee (x_1\wedge x_3) \vee (x_2\wedge x_3)=(x_1\vee x_2) \wedge (x_1\vee x_3) \wedge (x_2\vee x_3).
\]

\begin{lemma} \label{trivial}
Let  $n \ge 3$ and let $f \in\PT(\le) \cap \PT(\ne)$ be an $n$-ary function on $\2$. Then,
  \begin{enumerate}[(i)]
\item  $f(0,\ldots,0)=0$, and
\item if $f(1,0,\ldots,0)=1$, then $f=e^n_1$ is the $n$-ary first projection function on $\2$.
\end{enumerate}
\end{lemma}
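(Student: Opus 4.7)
The plan is to unpack the two membership conditions at the outset and then exploit their interaction. A total function $f \in \PT(\le)$ is exactly a monotone function on $\2$, and a total function $f \in \PT(\ne)$ satisfies the self-duality identity $f(1+a_1,\ldots,1+a_n)=1+f(a_1,\ldots,a_n)$ for every input. So throughout the argument I have both properties available simultaneously, and the whole proof amounts to pushing a single value through these two constraints.

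For (i), I would apply self-duality to $(0,\ldots,0)$ to obtain $f(1,\ldots,1)=1+f(0,\ldots,0)$, and then use monotonicity, which forces $f(0,\ldots,0)\le f(1,\ldots,1)$. If $f(0,\ldots,0)=1$ held, we would get $f(1,\ldots,1)=0$ and thus $1\le 0$, a contradiction. Hence $f(0,\ldots,0)=0$ (and symmetrically $f(1,\ldots,1)=1$). This short step is essentially forced; there is no real obstacle here.

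For (ii), assuming $f(1,0,\ldots,0)=1$, I plan a two-case bootstrap to show $f(a_1,\ldots,a_n)=a_1$ for every $(a_1,\ldots,a_n)\in\{0,1\}^n$. First, if $a_1=1$, then $(1,0,\ldots,0)\le(1,a_2,\ldots,a_n)$ coordinatewise, so monotonicity gives $f(1,a_2,\ldots,a_n)\ge f(1,0,\ldots,0)=1$, hence $f(1,a_2,\ldots,a_n)=1$. Second, if $a_1=0$, then self-duality applied at $(0,a_2,\ldots,a_n)$ yields $f(1,1+a_2,\ldots,1+a_n)=1+f(0,a_2,\ldots,a_n)$; the left-hand side is $1$ by the first case, so $f(0,a_2,\ldots,a_n)=0$. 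Combining the two cases gives $f=e^n_1$. The only subtle point -- and the step I would double-check most carefully -- is aligning the self-duality identity so that the tuple pushed to the first case has a $1$ in its first coordinate, which is precisely what happens because $1+0=1$.
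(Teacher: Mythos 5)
Your proposal is correct and follows essentially the same route as the paper: part (i) by combining self-duality at $(0,\ldots,0)$ with monotonicity, and part (ii) by first using monotonicity to force $f(1,x_2,\ldots,x_n)=1$ and then self-duality to force $f(0,x_2,\ldots,x_n)=0$. Your write-up is just a slightly more explicit version of the paper's argument.
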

\begin{proof}  To see that (i) holds, suppose, for the sake of a contradiction, that $f(0,\ldots,0)=1$.
 Then, as $f \in \PT(\ne)$, we have $f(1,\ldots,1)=0$, and hence $f \not\in \PT(\le)$.

Now, we show that (ii) also holds. Since $f \in \PT(\le)$ and $f(1,0,\ldots,0)=1$, we have that $f(1,x_2,\ldots,x_n)=1$, for all $x_2,\ldots,x_n \in \2$. 
As $f\in \PT(\ne)$, we have that $f(0,x_2,\ldots,x_n)=0$, thus showing that $f=e^n_1$. 
\end{proof}

As mentioned above, it was shown in \cite{bornerhaddad1} that both $\PT(\le)$ and $\PT(\ne)$ are separating clones on $\2$. 
However, this is not the case for their intersection.

\begin{lemma} \label{not1}
The clone $\PT(\le) \cap \PT(\ne)$ is not a separating clone on $\2$.
\end{lemma}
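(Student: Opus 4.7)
My plan is to argue by contradiction, exploiting the extreme rigidity of $C := \PT(\le) \cap \PT(\ne)$ exposed by Lemma \ref{trivial}. Suppose $C$ is separating with witness $m \ge 1$, and fix any $n > \max(m, 2)$ so that the definition supplies $n$-ary functions $f_1, \dots, f_m \in C$ whose joint evaluation $\va \mapsto (f_1(\va), \dots, f_m(\va))$ is injective on $\2^n$. For $i \in \{1, \dots, n\}$, write $\ve_i \in \2^n$ for the unit vector with a single $1$ in coordinate $i$.

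The first step is a permutation-symmetric strengthening of Lemma \ref{trivial}(ii): since both relations $\le$ and $\ne$ on $\2$ are invariant under swapping coordinates, preservation by a function is invariant under permuting its variables. Hence if $f \in C$ is $n$-ary with $n \ge 3$ and $f(\ve_i) = 1$ for some $i$, then $f = e^n_i$. Combined with Lemma \ref{trivial}(i), this says that every such $f$ satisfies $f(\0) = 0$, and that the only $n$-ary function in $C$ which attains value $1$ on any unit vector $\ve_i$ is the projection $e^n_i$ itself.

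The second step is to apply this to the $n$ pairs $(\0, \ve_i)$. For each $i$, the injectivity of the joint evaluation forces some $f_\ell$ to separate $\0$ from $\ve_i$; since $f_\ell(\0) = 0$, we must have $f_\ell(\ve_i) = 1$, which by the first step pins down $f_\ell = e^n_i$. Letting $i$ range over $\{1, \dots, n\}$ shows that each of the $n$ distinct projections $e^n_1, \dots, e^n_n$ must occur among $f_1, \dots, f_m$, whence $m \ge n$, contradicting $n > m$. The only mild obstacle I anticipate is justifying the first step (extending Lemma \ref{trivial}(ii) from $\ve_1$ to an arbitrary $\ve_i$); this is a routine variable-renaming argument exploiting the symmetry of $\le$ and $\ne$, after which the remainder of the proof is simply pigeonhole on the $n$ carefully chosen pairs $(\0, \ve_i)$.
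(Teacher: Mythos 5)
Your argument is correct and is essentially the paper's proof in contrapositive form: the paper fixes $n=m+1$ and directly exhibits a unit vector $\ve_t$ on which all of $f_1,\dots,f_m$ vanish (hence colliding with $\vec 0$), while you argue that injectivity would force all $n$ distinct projections $e^n_1,\dots,e^n_n$ to occur among only $m$ functions; both are the same pigeonhole on the pairs $(\vec 0,\ve_i)$ via Lemma \ref{trivial}. One correction to your first step: the relation $\le$ is \emph{not} invariant under swapping coordinates (the pair $(0,1)$ belongs to $\le$ but $(1,0)$ does not), so that is not the right justification for extending Lemma \ref{trivial}(ii) from $\ve_1$ to $\ve_i$; the extension instead holds because $\PT(\le)\cap\PT(\ne)$ is a clone and is therefore closed under permuting the variables of its members (equivalently, one simply repeats the proof of (ii) with coordinate $i$ in place of coordinate $1$).
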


\begin{proof} Let $m \ge 1$ and set $n:=m+1$. 
Let $f_1,\ldots,f_m \in \PT(\le) \cap \PT(\ne)$ be functions of arity $m+1$ and let $\vb ={\vec 0}:=(0,\ldots,0) \in \2^{m+1}$. 
Clearly, 
 $$(f_1({\vec 0}),\ldots,f_{m}({\vec 0}))=(0,\ldots,0).$$ 
 We show that there is $\va \in \2^{m+1}\setminus\{{\vec 0}\}$ such that
 $$(f_1({\va}),\ldots,f_{m}({\va}))=(0,\ldots,0).$$
 
 For $i=1,\ldots,m+1$, let $\ve_i$ be the vector corresponding to the $i$-th row of the identity $(m+1) \times (m+1)$ matrix, that is,
 $$\ve_i:=(0,\ldots,0,\underbrace{1}_{i{\rm-th~position}},0,\ldots,0)\in \2^{m+1}.$$
   Now, for every $j=1,\ldots,m$, there is at most one $i\in \{1,\ldots,m+1\}$ such that $f_j(\ve_i)=1$. 
 Indeed, by Lemma \ref{trivial} this is the case only if $f_j$ is the $(m+1)$-ary $i$-th projection function on $\2$. 
 Therefore, there is at least one 
 $t\in\{1,\ldots,m+1\}$ such that $f_j(\ve_t)=0$, for all $j\in\{1,\ldots,m\}$. 
 Set $\va = \ve_t$. Then 
 $$(f_1({\vec 0}),\ldots,f_{m}({\vec 0}))=(f_1({\ve_t}),\ldots,f_{m}({\ve_t})),$$ thus proving that
$\PT(\le) \cap \PT(\ne)$ is not a separating clone on $\2$. 
\end{proof}

Now, it is clear that
$$(\pp(\ne) \cap \pp(\le)) \cap \O2=\PT(\ne) \cap \PT(\le)$$
and thus, combining Theorem \ref{criterion} and Lemma \ref{not1}, we obtain the following result.

\begin{theorem}\label{th:two-not} The intersection of the two finitely generated maximal partial clones \break $\pp(\ne)$ and $\pp(\le)$ is not a finitely generated partial clone on $\2$. \endproof
\end{theorem}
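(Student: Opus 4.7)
The plan is to invoke the criterion from Theorem \ref{criterion} applied to the partial clone $D := \pp(\ne) \cap \pp(\le)$. To do so, I need to check two hypotheses: that $D$ is strong, and that its total part $D \cap \O2$ fails to be a separating clone.

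For strongness, I would simply observe that both $\pp(\ne)$ and $\pp(\le)$ are strong partial clones, since (as noted earlier in the paper) any partial clone of the form $\pp\rho$ is strong. The intersection of two strong partial clones is strong: if $g$ is a subfunction of some $f \in \pp(\ne) \cap \pp(\le)$, then $g$ lies in each of the two (strong) partial clones, hence in their intersection. Thus $D$ is a strong partial clone on $\2$.

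Next, I would identify the total part of $D$. By definition of $\PT(\rho) := \pp(\rho) \cap \OA$, one has
\[
D \cap \O2 \;=\; \pp(\ne) \cap \pp(\le) \cap \O2 \;=\; \PT(\ne) \cap \PT(\le),
\]
and Lemma \ref{not1} tells us this clone is not separating on $\2$. Applying Theorem \ref{criterion} to $D$ then yields that $D = \pp(\ne) \cap \pp(\le)$ is not finitely generated, which is the desired conclusion.

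The proof is essentially a one-line invocation once the two ingredients are in place, so there is no real obstacle; the only thing to make explicit is the identification of $D \cap \O2$ with $\PT(\ne) \cap \PT(\le)$ and the observation that strongness is preserved under intersection. All the substantive work has already been carried out in Lemma \ref{not1} (where separation fails via the $\ve_i$ argument) and in the general criterion of Theorem \ref{criterion}.
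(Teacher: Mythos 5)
Your proposal is correct and follows exactly the paper's own argument: the paper derives the theorem by noting that $(\pp(\ne)\cap\pp(\le))\cap\O2=\PT(\ne)\cap\PT(\le)$ and then combining Theorem~\ref{criterion} with Lemma~\ref{not1}. Your explicit verification that strongness is preserved under intersection is a detail the paper leaves implicit, but it is the same proof.
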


We generalize Theorem~\ref{th:two-not} to any finite set $A=\k:=\{0,1,\ldots,k-1\}$ with $k \ge 3$. 
First, we need to recall a construction of \cite{pasz}.  
Let  $\le$ be the
natural (thus linear and bounded) order on $\k$, $p$ be a prime
divisor of $k$ and $\pi$ be the fixed-point-free permutation
defined by
$$ \pi=(\; 0\;\; 1\;\; \ldots \; p-1 \;)(\; p \;\; (p+1) \; \ldots \; (2p-1) \;) \ldots (\; (k-p) \; \; (k-p+1) \; \ldots \; (k-1) \;).$$ 

Note that the permutation $\pi$ consists  of  $k/p$ cycles of
length $p$. It is well known (see e.g., \cite{halau1, ivosurvey} and \cite{Lau2006} chapter 5) that both $\PT(\le)$ and $\PT (\pi)$ are
maximal clones on $\k$. Moreover, we have the following result appearing in \cite{pasz}. 

\begin{theorem} \label{palfymax} {\rm (\cite{pasz})}   Let $k \ge 3$. Then
$\PT(\le)   \cap  \PT(\pi) = \JA$. 
\end{theorem}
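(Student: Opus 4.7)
Of course $\JA \subseteq \PT(\le) \cap \PT(\pi)$, so the content is the reverse inclusion. My plan is, given an $n$-ary $f \in \PT(\le) \cap \PT(\pi)$, first to show that $f$ is the identity on the diagonal, and then to use this together with the cycle structure of $\pi$ and the hypothesis $k \ge 3$ to force $f$ to be a projection.

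For the first step, the unary map $d \colon c \mapsto f(c,\ldots,c)$ is monotone (since $f$ preserves $\le$) and commutes with $\pi$ (since $f$ preserves $\pi$). Because $\pi$ decomposes $\k$ into $k/p$ cycles $C_m = \{mp, mp+1, \ldots, mp+p-1\}$ of consecutive integers, the $\pi$-equivariant map $d$ carries each orbit $C_m$ onto some orbit $C_{\sigma(m)}$ via a $\pi$-equivariant bijection. Monotonicity of $d$ within $C_m$ rules out any nontrivial cyclic shift of this bijection, forcing $mp + j \mapsto \sigma(m)p + j$; monotonicity of $d$ across cycles then forces $\sigma(0) < \sigma(1) < \cdots < \sigma(k/p - 1)$, so $\sigma = \mathrm{id}$ and $d = \mathrm{id}_{\k}$.

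For the second step, I would induct on $n$. The base case $n = 1$ is immediate from idempotence. For $n \ge 2$, any diagonal identification of two arguments of $f$ yields an $(n-1)$-ary function still in $\PT(\le) \cap \PT(\pi)$, which is a projection by the inductive hypothesis. I would then combine the constraints coming from all pairwise identifications with the monotonicity bounds $\min_i a_i \le f(\va) \le \max_i a_i$ and with the $\pi$-equivariance relation $f(\pi^j(\va)) = \pi^j(f(\va))$ for $0 \le j < p$ (which forces $f(\va)$ to lie in $\bigcap_j \pi^{-j}([\min \pi^j(\va),\max \pi^j(\va)])$), aiming to pin $f$ down as a single projection $e^n_\ell$.

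The delicate point is this inductive step, and it must use $k \ge 3$ essentially: on $\2$ with $\pi = (0\,1)$, the Boolean majority $\maj$ is idempotent, monotone, and $\pi$-equivariant but is not a projection, as recalled earlier in the paper. So the argument must invoke either a $\pi$-cycle of length $\ge 3$ (when $p \ge 3$) or the existence of at least two $\pi$-orbits (when $p = 2$ and hence $k \ge 4$). Both cases give $\pi$-equivariance constraints on tuples straddling distinct orbits, or living on a longer cycle, that a majority-like operation on a single two-element orbit cannot simultaneously satisfy, and this is what should rule out non-projections and complete the proof.
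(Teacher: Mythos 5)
The paper does not actually prove this statement; it is imported from \cite{pasz}, so there is no in-paper argument to measure yours against, and your proposal has to stand on its own. Its first step does: the diagonal map $d(c)=f(c,\ldots,c)$ is monotone and $\pi$-equivariant, equivariance makes $d$ restrict to a bijection from each $p$-element orbit onto an orbit, monotonicity inside an orbit of consecutive integers kills any nonzero cyclic shift, and monotonicity across orbits makes the induced map on orbits strictly increasing, hence the identity on a finite chain. So every $f\in\PT(\le)\cap\PT(\pi)$ is idempotent and satisfies $\min_i a_i\le f(\va)\le\max_i a_i$; this part is complete and correct (and, as you could have noted, does not yet use $k\ge 3$).

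The second step, however, is a plan rather than a proof, and it is exactly where the content of the theorem lies. You list the available constraints (identifications of variables give projections by induction, the interval bound, $\pi$-equivariance, monotonicity) and assert that combining them should ``pin $f$ down as a single projection,'' but the deduction is never carried out. Already for $n=2$ the inductive hypothesis yields nothing beyond idempotence, so the binary case must be settled by a genuine analysis of how monotonicity interacts with equivariance within and across orbits: for instance, for $k=4$ and $\pi=(0\,1)(2\,3)$, the assumption $f(0,1)=0$, $f(0,2)=2$ is consistent with all of your listed constraints locally and only collapses after propagating values through $f(1,3)$, $f(0,3)$, $f(2,3)$, $f(3,2)$, $f(3,1)$ to a monotonicity violation. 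For $3\le n\le k$ one must additionally exclude majority-, minority- and semiprojection-like behaviour on tuples with pairwise distinct entries (a \'Swierczkowski-type step), and only there does $k\ge 3$ enter. Your own caveat that ``the delicate point is this inductive step'' and that the listed constraints are ``what should rule out non-projections'' concedes that this portion is missing. As written, the proposal establishes idempotence of every member of $\PT(\le)\cap\PT(\pi)$ and nothing more; the reverse inclusion $\PT(\le)\cap\PT(\pi)\subseteq\JA$ remains unproved.
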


The partial clone $\pp \pi$ is not a maximal partial clone (see \cite{halau1,
halau2}) but is contained in a maximal partial clone. The following result is stated in
\cite{halau1} and its  proof is given in \cite{halau2}.

\begin{theorem} \label{lu-diet} {\rm (\cite{halau2})}~ Let $k \ge 3$, $p$ be a prime divisor of $k$ and $\pi$ be any fixed-point-free permutation with all cycles of length $p$ on $\k$. Define
$$\varrho_{\pi} := \{ (x,\pi(x),\pi^2(x),\ldots,\pi^{p-1}(x)) \st x\in \k\}.$$
Then,
\begin{enumerate}[(i)]
\item $\pp (\pi)  \subset \pp(\varrho_{\pi})$,
\item $\pp(\varrho_{\pi})$  is the unique maximal partial clone containing the maximal clone $\PT(\pi)$ on $\k$, and
\item $\pp(\varrho_{\pi})$ is generated by its unary and binary partial functions.
\end{enumerate}
\end{theorem}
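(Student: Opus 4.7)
The plan is to prove the three claims in order, each leveraging the Galois duality between partial clones and relations developed in \cite{bornerhaddad1}. For (i), I would verify $\pp(\pi)\subseteq\pp(\varrho_\pi)$ by a direct matrix argument: given $f\in\pp(\pi)$ of arity $n$ and a $p\times n$ matrix $M$ with columns in $\varrho_\pi$ and rows in $\d(f)$, each column has the form $(x_j,\pi(x_j),\ldots,\pi^{p-1}(x_j))$, so row $i$ equals $\pi^i$ applied componentwise to row $0$. Iterated use of the defining identity $f(\pi(\va))=\pi(f(\va))$ valid for $f\in\pp(\pi)$ yields $f(M_{i*})=\pi^i(f(M_{0*}))$, so the output column lies in $\varrho_\pi$. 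For strict inclusion when $p\ge 3$, I would exhibit a unary partial function defined on a proper nonempty subset of a single $\pi$-orbit that fails the $\pi$-equivariance required by $\pp(\pi)$ but vacuously preserves $\varrho_\pi$ because the full orbit is not contained in its domain.

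For (ii), the plan is to invoke the Galois correspondence: every maximal partial clone on $\k$ is of the form $\pp(\sigma)$ for some relation $\sigma$. Suppose $\pp(\sigma)\supseteq\PT(\pi)$ is maximal. Then $\sigma$ is preserved by every total $\pi$-equivariant function, so $\sigma$ lies in the relational clone of $\PT(\pi)$. Because $\pi$ is a fixed-point-free permutation with all cycles of prime length $p$, this relational clone is very restrictive, and its relations can be classified in terms of $\pi$-orbits. From part (i) we already have $\pp(\varrho_\pi)$ as one maximal partial clone above $\PT(\pi)$; the remaining work is to check that every other candidate relation either fails to yield a proper partial clone or yields the same partial clone $\pp(\varrho_\pi)$.

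For (iii), I would decompose an arbitrary $n$-ary partial function $f\in\pp(\varrho_\pi)$ into compositions of unary and binary partial functions in $\pp(\varrho_\pi)$. A useful binary building block is the diagonal $g(x,y)=x$ with $\d(g)=\{(x,x)\st x\in\k\}$, which lies in $\pp(\varrho_\pi)$ and permits gluing unary restrictions of variables. One can handle each point $\va\in\d(f)$ by constructing a ``point'' partial function from unary maps $u_i$ with $\d(u_i)=\{a_i\}$ chained through copies of $g$, and then reconstruct $f$ by combining these point functions, possibly after extending $f$ to a total function in $\PT(\pi)$ and cutting the extension back with further binary partial functions. The main obstacle I anticipate is the bookkeeping: at each stage the domain of the intermediate partial function must contain or omit each $\pi$-orbit in a way that keeps the $\varrho_\pi$-preservation condition vacuous on the pieces that would otherwise violate it. I expect this combinatorial verification to be the chief technical difficulty, which presumably accounts for the proof being relegated to \cite{halau2}.
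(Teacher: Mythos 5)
This theorem is quoted by the paper from \cite{halau2}; the paper itself contains no proof of it, so there is nothing in-text to compare your argument against, and your proposal has to stand on its own. Judged that way, only the inclusion in part (i) is actually proved: your matrix argument showing $f(M_{i*})=\pi^i(f(M_{0*}))$ is correct and complete. Your witness for strictness covers only $p\ge 3$, and the gap at $p=2$ is not cosmetic: for $p=2$ the relation $\varrho_{\pi}=\{(x,\pi(x))\st x\in\k\}$ \emph{is} the graph of $\pi$, so under the graph-preservation reading of $\pp(\pi)$ the two partial clones coincide and no strictness witness can exist; any honest proof must confront this case explicitly. For part (ii), what you have written is a plan, not a proof. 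Invoking the fact that every maximal partial clone has the form $\pp(\sigma)$ already requires the full Haddad--Rosenberg classification of maximal partial clones, and the step ``its relations can be classified in terms of $\pi$-orbits \dots\ the remaining work is to check that every other candidate relation either fails or yields the same partial clone'' is precisely the entire content of the theorem. Nothing in the proposal indicates how that check would go.

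Part (iii) contains a step that would actually fail. You propose to reconstruct $f\in\pp(\varrho_{\pi})$ ``possibly after extending $f$ to a total function in $\PT(\pi)$,'' in the spirit of Proposition 3.10 of \cite{bornerhaddad1}, where such extendability is what makes the method work for $\ple$ and $\pne$. But partial functions in $\pp(\varrho_{\pi})$ need not extend to total functions in $\PT(\pi)$: for $p\ge 3$ take the unary $u$ with $\d(u)=\{x,\pi(x)\}$ for a single orbit element $x$ and $u(x)=u(\pi(x))=x$. Since $\d(u)$ contains no full $\pi$-orbit, $u$ preserves $\varrho_{\pi}$ vacuously (this is the same observation you use for strictness in (i)), yet any $\pi$-equivariant total extension $U$ would have to satisfy $U(\pi(x))=\pi(U(x))=\pi(x)\ne x$. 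So the extension-and-restriction route is closed off, and the ``bookkeeping'' you defer is not mere bookkeeping but the essential difficulty. As it stands, parts (ii) and (iii) are unproved and part (i) is incomplete at $p=2$.
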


It is known that for $k=8$, there is a  bounded partial order $\le _8$ on $\k$, for which the maximal partial clone $\PT(\le_8)$ is not finitely generated. This was shown by Tardos in \cite{tardos} for the partial order whose diagram is given below.

\parindent=1.5cm
\setlength{\unitlength}{0.50mm}
\begin{picture}(70,100)
\thinlines
\put(60,20){\circle*{2.0}}
\put(60,12){\makebox{$0$}}

\put(30,30){\circle*{2.0}}
\put(24,30){\makebox{$4$}}

\put(30,50){\circle*{2.0}}
\put(24,50){\makebox{$3$}}

\put(30,70){\circle*{2.0}}
\put(24,70){\makebox{$2$}}

\put(90,70){\circle*{2.0}}
\put(92,70){\makebox{$5$}}

\put(90,50){\circle*{2.0}}
\put(92,50){\makebox{$6$}}

\put(90,30){\circle*{2.0}}
\put(92,30){\makebox{$7$}}

\put(60,80){\circle*{2.0}}
\put(60,82){\makebox{$1$}}

\put(60,20){\line(-3,1){30}}
\put(60,20){\line(3,1){30}}
\put(30,30){\line(0,1){40}}
\put(90,30){\line(0,1){40}}
\put(30,70){\line(3,1){30}}
\put(90,70){\line(-3,1){30}}
\put(30,30){\line(3,1){60}}
\put(30,50){\line(3,1){60}}
\put(90,30){\line(-3,1){60}}
\put(90,50){\line(-3,1){60}}

\end{picture}
\parindent=0.5cm

Also, it is well known that every nontrivial order relation (whether it is bounded or not) determines a maximal partial clone on $\k$ (see e.g., \cite{halau1,halau2} and \cite{Lau2006} chapter 20) and such maximal partial clones are all finitely generated. 

\begin{theorem} \label{nozaki}{\rm (\cite{nozaki1, nozaki2})}
Let  $\le$ be a non-trivial order relation on $\k$. Then the maximal partial clone $\pp(\le)$ is generated by its binary partial functions on $\k$.
\end{theorem}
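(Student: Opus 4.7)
The plan is to exhibit, for a given non-trivial order $\le$ on $\k$, a finite set $\cF$ of binary partial functions in $\pp(\le)$ and to show $\langle \cF \rangle = \pp(\le)$. A natural candidate for $\cF$ consists of the binary \emph{partial meet} $\wedge_p$ and \emph{partial join} $\vee_p$, defined only on $\le$-comparable pairs (so $\wedge_p(x,y) = \min_{\le}(x,y)$ when $x \le y$ or $y \le x$, and undefined otherwise), together with a small family of binary partial functions with singleton domains. The former lie in $\pp(\le)$ because they act monotonically on their domain; the latter lie in $\pp(\le)$ automatically, since the monotonicity constraint is vacuous on a one-point domain.

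I would then proceed in two steps. First, an \emph{extension lemma}: every $n$-ary $f \in \pp(\le)$ admits a total monotone extension $\hat f \in \PT(\le)$. This is obtained by a greedy construction along a linear extension of the coordinate-wise order on $A^n$, assigning at each $\va \notin \d(f)$ a value compatible with the monotonicity constraints coming from already-defined images; such a value exists because $A$ is finite and $f$ itself preserves $\le$ on $\d(f)$. Second, a \emph{domain-restriction argument}: since $\pp(\le)$ is strong, the restricted partial projection $e^n_i|_{\d(f)}$ lies in $\pp(\le)$ for each $i$, and one has $f = \hat f[e^n_1|_{\d(f)}, \ldots, e^n_n|_{\d(f)}]$. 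The task thus reduces to synthesizing $\hat f$ together with the partial projections $e^n_i|_{\d(f)}$ from $\cF$.

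The main obstacle is that $\PT(\le)$ need not be finitely generated at all --- the $8$-element Tardos poset discussed above witnesses this phenomenon --- so $\hat f$ cannot in general be obtained as a term in binary total monotone functions alone. The resolution is to use the binary \emph{partial} operations of $\cF$ to bypass this obstruction: compositions involving $\wedge_p$, $\vee_p$, and singleton-domain selectors automatically confine their effective inputs to tuples in $\d(f)$, rendering the complicated total behavior of $\hat f$ outside $\d(f)$ irrelevant. This mirrors the phenomenon --- already visible in the $k=2$ case treated in \cite{bornerhaddad1} --- that a strong partial clone determined by a single relation can be finitely generated by binary partial functions even when its total restriction is not. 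The technical heart of the argument is to verify that a bounded collection of binary partial functions suffices, via suitably complex compositions, to realize every member of $\pp(\le)$ regardless of arity or domain shape; in particular, that the singleton-domain selectors can be combined with $\wedge_p$ and $\vee_p$ to produce partial functions with arbitrarily prescribed finite domains in any fixed power $A^n$.
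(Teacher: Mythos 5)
The paper itself offers no proof of this statement: Theorem~\ref{nozaki} is quoted verbatim from Nozaki and Lashkia \cite{nozaki1,nozaki2}, so there is nothing internal to compare your argument against. Judged on its own merits, your proposal has two genuine gaps. The first is that your extension lemma is false for a general non-trivial order. Take $k=4$ with the order generated by $a<c$, $a<d$, $b<c$, $b<d$ (a crown, not a lattice), and let $f$ be the unary partial function with $\d(f)=\{c,d\}$, $f(c)=a$, $f(d)=b$. Since $c$ and $d$ are incomparable, $f\in\pp(\le)$ vacuously; but any total monotone extension $\hat f$ would need $\hat f(a)$ to be a common lower bound of $a$ and $b$, which does not exist. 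The extension property you invoke is special to the two-element chain (where the paper uses it, citing \cite{bornerhaddad1}) and more generally to lattices; it cannot be the backbone of a proof for arbitrary posets on $\k$. Your greedy construction silently assumes that a compatible value always exists, and that is exactly where it breaks.

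The second gap is that the step you label the ``technical heart'' --- showing that $\wedge_p$, $\vee_p$ and finitely many singleton-domain selectors generate, via composition, the restricted projections $e^n_i|_{\d(f)}$ for arbitrary $n$ and arbitrary $\d(f)\subseteq\k^n$, and then all of $\pp(\le)$ --- is asserted rather than proved. This is not a routine verification to be deferred: it is the entire content of the theorem. Note in particular that $\langle\cF\rangle$ for a finite $\cF$ is only guaranteed to be closed under composition; it is not automatically a strong partial clone, so the fact that $\pp(\le)$ contains $e^n_i|_{\d(f)}$ does not put that subfunction in $\langle\cF\rangle$. Controlling which domains are reachable by composition from a fixed finite stock of binary partial functions is precisely the delicate combinatorial work that Nozaki and Lashkia carry out, and your proposal does not supply a mechanism for it. You correctly identify the obstruction (Tardos's poset shows $\PT(\le)$ may fail to be finitely generated, so no reduction to the total clone can work), but identifying the obstruction is not the same as circumventing it.
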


Combining Theorems \ref{criterion}, \ref{palfymax}, \ref{lu-diet} and \ref{nozaki} we obtain the following theorem which asserts that, for every $k\ge 1$, 
the partial clone $\pp(\varrho_{\pi})\cap \pp(\le)$ on $\k$ is not finitely generated. 

\begin{theorem} ~ Let $k \ge 3$,  $\varrho_{\pi}$ as in Theorem $\ref{lu-diet}$  and let  $\le$ be a linear order on $\k$. Then the intersection of the two finitely generated maximal partial clones  $\pp(\varrho_{\pi})$ and $\pp(\le)$ is not a finitely generated partial clone on $\k$.
\end{theorem}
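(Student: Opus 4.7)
The plan is to apply Theorem~\ref{criterion} to the partial clone
\[
D:=\pp(\varrho_\pi)\cap\pp(\le).
\]
Each factor is strong, being of the form $\pp(\rho)$ for a relation $\rho$, and the intersection of strong partial clones is strong; hence $D$ is a strong partial clone. It therefore suffices to show that the clone
\[
C\;:=\;D\cap\OA\;=\;\PT(\varrho_\pi)\cap\PT(\le)
\]
is not separating on $\k$.

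The first key step is to identify $C$ as the clone $\JA$ of projections. To this end I would first argue that $\PT(\varrho_\pi)=\PT(\pi)$. For a total $n$-ary $f$, unpacking the matrix condition using that every tuple of $\varrho_\pi=\{(x,\pi(x),\ldots,\pi^{p-1}(x))\mid x\in\k\}$ is the $\pi$-orbit of its first coordinate, preservation of $\varrho_\pi$ reduces to the single identity
\[
f(\pi(x_1),\ldots,\pi(x_n))=\pi(f(x_1,\ldots,x_n)),
\]
i.e., to $f\in\PT(\pi)$. Combining this with Theorem~\ref{palfymax} (after an order-preserving relabeling of $\k$ if $\le$ is not the natural order, so as to fit its hypotheses) yields $C=\PT(\pi)\cap\PT(\le)=\JA$.

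The second step is to verify that $\JA$ is not separating on $\k$. Fix an arbitrary $m\ge 1$ and set $n:=m+1$. The only $n$-ary functions in $\JA$ are the projections $e^n_1,\ldots,e^n_n$, so any candidate family $f_1,\ldots,f_m\in\JA\cap\OpnA$ has the form $f_j=e^n_{i_j}$ for indices $i_1,\ldots,i_m\in\{1,\ldots,n\}$. Since $n>m$, some coordinate $t\in\{1,\ldots,n\}\setminus\{i_1,\ldots,i_m\}$ exists; the distinct tuples $(0,\ldots,0)$ and the one with $1$ in position $t$ and $0$'s elsewhere then have identical images under $(f_1,\ldots,f_m)$, contradicting separation. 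Theorem~\ref{criterion} then yields that $D$ is not finitely generated.

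The only genuinely delicate point is the equality $\PT(\varrho_\pi)=\PT(\pi)$, which rests on the cyclic structure of $\varrho_\pi$; once that is in hand, the application of P\'alfy's theorem (Theorem~\ref{palfymax}) and the failure of separation for $\JA$ are essentially bookkeeping.
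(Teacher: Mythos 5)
Your proposal is correct and follows essentially the same route as the paper: reduce to Theorem~\ref{criterion}, identify the total part of $\pp(\varrho_\pi)\cap\pp(\le)$ with $\PT(\pi)\cap\PT(\le)=\JA$ via Theorem~\ref{palfymax}, and show $\JA$ is not separating by the same counting-of-projections argument with $n=m+1$. Your explicit verification that $\PT(\varrho_\pi)=\PT(\pi)$ is a point the paper passes over silently, so making it is a small improvement rather than a deviation.
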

\begin{proof} Clearly, $\pp(\le)   \cap  \pp (\varrho_{\pi})$ is a strong partial clone on $\k$, and we have
\[
\begin{array}{lll}
(\pp(\le)   \cap  \pp (\varrho_{\pi}) \cap \OA &=& (\pp(\le)   \cap  \OA) \cap (\pp (\varrho_{\pi}) \cap \OA) \\
&=& \PT(\le)  \cap \PT(\pi) \\
&=& \JA.
\end{array}\]
We show that $\JA$ is a not a separating clone on $\k$. Let $m \ge 1$,  set $n:=m+1$ and let $f_1,\ldots,f_m$ be projections on $\k$ of arity $m+1$. Set  $\vb ={\vec 0}:=(0,\ldots,0)$. Then 
$$(f_1({\vec 0}),\ldots,f_{m}({\vec 0}))=(0,\ldots,0).$$
Since there are $m+1$ different projections on $\k$ of arity $m+1$, there is $i \in \{1,2,\ldots,m,m+1\}$ such that the projection
 $e_i^{m+1}$ is not among $f_1,\ldots,f_m$. Choose 
 $$\va =\ve_i:=(0,\ldots,0,\underbrace{1}_{i{\rm-th~position}},0,\ldots,0) \in \k^{m+1}.$$ 
 Then $\va \ne {\vec 0}$ and  $(f_1({\va}),\ldots,f_{m}({\va}))=(0,\ldots,0)$, thus proving that $\JA$ is a not separating clone on $\k$. 
 By Theorem \ref{criterion}, $\pp(\le)   \cap  \pp (\varrho_{\pi})$ is not a finitely generated clone on $\k$.
\end{proof}

Note that  the partial clone $\pp(\le)   \cap  \pp (\varrho_{\pi})$ does not consist only of partial projections. 
Indeed, let $n \ge 3$ and construct the partial $n$-ary function $f$ on $\k$ by setting
$\d(f):=\{0\}\times\k^{n-1}$ and $f(\vv)=1$ for all $\vv=(0,x_2,\ldots,x_n)\in \{0\}\times\k^{n-1}$. 
Clearly, $f$ is not a partial projection. Moreover, $f \in \pp(\le)$ since it is a constant function, and $f \in \pp (\varrho_{\pi})$ since there is no $p \times n$ matrix over $\k$ with columns  in $\varrho_{\pi}$ and rows in $\d(f)$.

 The study in this paper yields the following interesting problem: 

\begin{problem} Let $|A|=k \ge 2$. Describe the strong partial clones on $A$ whose total part is $\JA$.
\end{problem}

 Note that Str$(\JA)$, the partial clone consisting of all subfunctions of projections, is one of them.
 These are not finitely generated partial clones.

\end{document}